\begin{document}
\mainmatter
\title{Asymptotics for connected graphs and irreducible tournaments}

\titlerunning{Asymptotics for connected graphs and irreducible tournaments}
\author{Thierry Monteil\inst{1} \and Khaydar Nurligareev\inst{2}}
\authorrunning{Thierry Monteil and Khaydar Nurligareev}
\tocauthor{Thierry Monteil and Khaydar Nurligareev}
\institute{LIPN, CNRS (UMR 7030), Universit\'e Paris 13, F-93430 Villetaneuse, France,\\
\email{thierry.monteil@lipn.univ-paris13.fr},\\
\texttt{https://monteil.perso.math.cnrs.fr/}
\and
LIPN, CNRS (UMR 7030), Universit\'e Paris 13, F-93430 Villetaneuse, France,\\
\email{khaydar.nurligareev@lipn.univ-paris13.fr},\\
\texttt{https://lipn.univ-paris13.fr/\~{}nurligareev/}
}
\maketitle

\begin{abstract}
We compute the whole asymptotic expansion of the probability that a
large uniform labeled graph is connected, and of the probability that a large
uniform labeled tournament is irreducible. In both cases, we provide a
combinatorial interpretation of the involved coefficients.
\end{abstract}

\section{Introduction}

Let us consider the Erd\"os-R\'enyi model of random graphs $G(n,1/2)$, where for
each integer $n\geqslant 0$, we endow the set of undirected simple graphs on the set
$\{1,\ldots,n\}$ with the uniform probability: each graph appears with
probability $1/2^{n\choose 2}$.
The probability $p_n$ that such a random graph of size $n$ is connected goes to
$1$ as $n$ goes to $\infty$.
In 1959, Gilbert~\cite{Gilbert1959} provided a more accurate estimation and
proved that 
$$ p_n = 1 - \dfrac{2n}{2^{n}} + O\left(\dfrac{n^2}{2^{3n/2}}\right).$$

In 1970, Wright~\cite{Wright1970apr} computed the first four terms of the
asymptotic expansion of this probability:
$$ p_n = 1 - {n\choose1}\dfrac{1}{2^{n-1}} - 2{n\choose3}\dfrac{1}{2^{3n-6}} -
24{n\choose4}\dfrac{1}{2^{4n-10}} + O\left(\dfrac{n^5}{2^{5n}}\right).$$

The method can be used to compute more terms, one after another. However, it
does not allow to provide the structure of the whole asymptotic expansion,
since no interpretation is given to the coefficients $1,2,24,\dots$.

The first goal of this paper is to provide such a structure: the $k$th term of
the asymptotic expansion of $p_n$ is of the form 
$$i_k\ 2^{k(k+1)/2}\ {n\choose k}\ \dfrac{1}{2^{kn}},$$
where $i_k$ counts the number of irreducible labeled tournaments of size $k$.
A tournament is said \emph{irreducible} if for every partition $A\sqcup{B}$ of
the set of vertices there exist an edge from $A$ to $B$ and an edge from $B$ to
$A$.  Equivalently, a tournament is irreducible if, and only if, it is strongly
connected~\cite{Rado1943}\cite{Roy1958}.


\begin{theorem}[Connected graphs]\label{th-graph}
For any positive integer $r$, the probability $p_n$ that a random graph of size
  $n$ is connected satisfies
\begin{equation*}\label{formula: probability asymptotics for graphs}
 p_n = 1 - \sum\limits_{k=1}^{r-1} i_k {n\choose k}
  \dfrac{2^{k(k+1)/2}}{2^{nk}} + O\left(\dfrac{n^r}{2^{nr}}\right),
\end{equation*}
where $i_k$ is the number of irreducible labeled tournaments of size $k$.
\end{theorem}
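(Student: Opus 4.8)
The plan is to exploit the numerical coincidence that labeled graphs and labeled tournaments are both counted by $f_n = 2^{\binom n2}$, so that they share the exponential generating function $F(x) = \sum_{n\ge0} 2^{\binom n2} x^n/n!$. A graph is a set of connected components, whence $F = \exp(C)$ with $C(x) = \sum_n C_n x^n/n!$; a tournament decomposes uniquely as a \emph{linearly ordered} sequence of its strongly connected (irreducible) components, whence $F = 1/(1-I)$ with $I(x) = \sum_n i_n x^n/n!$. Comparing the two factorizations gives the bridge $\exp(-C) = 1 - I$, which is the algebraic source of the irreducible tournaments in the statement.

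Next I would pass from generating functions to probabilities. Writing $p_k = C_k/f_k$ and using $f_k f_{n-k}/f_n = 2^{\binom k2 + \binom{n-k}2 - \binom n2} = 2^{-k(n-k)}$, the ``component of a marked vertex'' recurrence $f_n = \sum_{k=1}^n \binom{n-1}{k-1} C_k f_{n-k}$ normalizes to $1 = \sum_{k=1}^n \binom{n-1}{k-1} p_k\, 2^{-k(n-k)}$. Isolating the $k=n$ term yields $q_n := 1 - p_n = \sum_{k=1}^{n-1}\binom{n-1}{k-1} p_k\, 2^{-k(n-k)}$. The crucial feature is that $2^{-k(n-k)}$ has order $2^{-nj}$ with $j=\min(k,n-k)$, so each order of the expansion receives contributions from \emph{both} ends of the summation ($k$ small and $k$ close to $n$). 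On the tournament side, expanding $F-C = \tfrac1{1-I} + \log(1-I) = 1 + \sum_{m\ge2}\tfrac{m-1}{m} I^m$ gives the companion identity $f_n - C_n = \sum_{m\ge2}\tfrac{m-1}{m} T_{n,m}$, where $T_{n,m} = n!\,[x^n] I(x)^m$ is the number of tournaments on $\{1,\dots,n\}$ with exactly $m$ strong components; equivalently $p_n = \mathbb{E}[1/M_n]$ for $M_n$ the number of strong components of a uniform tournament.

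The core of the proof is then an asymptotic resummation of $q_n$ organized by the total size $k$ of the ``small part'', the union of all strong components other than the giant one. I would show, by induction on $r$, that collecting every configuration whose small part has size $k$ contributes $i_k \binom nk 2^{k(k+1)/2}/2^{nk}$ to $q_n$, up to $O(n^{k+1}/2^{n(k+1)})$. Here the replacement of $i_{n-k}$ by $f_{n-k}$ inside the giant costs only a factor $1 + O(n/2^{n})$ because $i_m/f_m = 1 - O(m/2^m)$, and the weights $\tfrac{m-1}{m}$ together with the source/sink placement of the small components conspire so that the weighted count of small parts of size $k$ is exactly $i_k$; this last fact is precisely the bridge $\exp(-C)=1-I$ read off coefficient by coefficient. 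Summing over $k=1,\dots,r-1$ then produces the claimed expansion.

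The main obstacle is exactly this recombination together with a uniform error bound. A naive term-by-term reading of either convolution identity fails, because $i_k \sim f_k$ makes the large-index terms contribute to \emph{low} orders: for instance the coefficient of $2^{-n}$ equals $2n$, obtained as $2(n-1)$ from the $k=n-1$ term plus $2$ from the $k=1$ term, and the vanishing of the $2^{-2n}$ coefficient (matching $i_2 = 0$) already requires a threefold cancellation among the $k=2$, $k=n-2$, and corrected $k=n-1$ contributions. Thus the real work is a bookkeeping lemma showing that these end-contributions and the recursive corrections to $p_{n-k}$ recombine into the single coefficient $i_k$, and a uniform estimate guaranteeing that the bulk of the sum ($j \le k \le n-j$) and all remainders with more components are $O(n^r/2^{nr})$. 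I expect establishing this lemma, effectively a weighted, finite-order avatar of $\exp(-C)=1-I$, to be the delicate step.
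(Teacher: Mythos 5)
Your algebraic skeleton coincides with the paper's: from $G=T$, $G=\exp(C)$ and $T=1/(1-I)$ you extract the bridge $1/G=1-I$, and that identity is indeed the source of the coefficients $i_k$. Your sanity checks (the coefficient $2n$ of $2^{-n}$ split as $2(n-1)+2$, the cancellation at order $2^{-2n}$ reflecting $i_2=0$) are also correct. The gap is the step you yourself flag at the end: the ``bookkeeping lemma'' --- that the contributions from both ends of the convolution $1-p_n=\sum_{k=1}^{n-1}\binom{n-1}{k-1}p_k\,2^{-k(n-k)}$, after recursive correction of the $p_{n-k}$, recombine into the single coefficient $i_k$ at each order $2^{-kn}$, with the bulk $r\leqslant k\leqslant n-r$ and all multi-component remainders uniformly $O(n^r/2^{nr})$ --- is never proved, only described. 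That lemma is not bookkeeping: it is the entire analytic content of the theorem. An induction on $r$ that feeds corrected expansions of $p_{n-k}$ back into the recurrence must control errors that get multiplied by binomial factors growing like powers of $n$, and nothing in the proposal shows these stay of the claimed order. As written, this is a correct plan whose hard step is left open.

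To close it you have two options. The paper's route is to package exactly this difficulty into Bender's theorem (Theorem~\ref{theorem: Bender's}): with $A(z)=G(z)-1$ and $F(z,w)=\ln(1+w)$ one gets $B=C$ and $D=\frac{\partial F}{\partial y}(z,A(z))=1/G=1-I$, so $c_n/n!=\sum_{k=0}^{r-1}d_k a_{n-k}+O(a_{n-r})$ follows once the hypotheses are verified; the only substantive one is the convolution estimate
$$
\sum\limits_{k=r}^{n-r} a_k a_{n-k} = O(a_{n-r}),
\qquad
a_n = \dfrac{2^{\binom n2}}{n!},
$$
which the paper proves by noting that $x_k=n!\,a_k a_{n-k}=\binom nk 2^{\binom k2+\binom{n-k}2}$ decreases then increases on $r\leqslant k\leqslant n-r$ and by bounding all middle summands by the \emph{second} term $x_{r+1}$ (bounding by the first term $x_r$ loses a factor of $n$ and is not enough). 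Alternatively, you can push your direct resummation through, but you would then have to prove an estimate equivalent to this one, together with the formal recombination of end-contributions into $i_k$ --- in effect reproving Bender's theorem in this special case. Either way, the convolution estimate and the propagation of errors through your induction are precisely what must be supplied before the proposal counts as a proof.
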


In particular, as there are no irreducible tournament of size $2$, this
explains why there is no term in ${n\choose 2}\dfrac{1}{2^{2n}}$ in Wright's
formula.
This result might look surprising as it relates asymptotics of undirected
objects with directed ones.

A similar development happened for irreducible tournaments.
For $n\geqslant 0$, we endow the set of tournaments on the set $\{1,\ldots,n\}$ with
the uniform probability: each tournament appears with probability
$1/2^{n\choose 2}$.
In 1962, Moon and Moser~\cite{MoonMoser1962} gave a first estimation of the
probability $q_n$ that a labeled tournament of size $n$ is irreducible, which
was improved in \cite{Moon1968} into
$$q_n = 1 - \dfrac{n}{2^{n-2}} + O\left(\dfrac{n^2}{2^{2n}}\right).$$
In 1970, Wright~\cite{Wright1970jul} computed the first four terms of the
asymptotic expansion of the probability that a labeled tournament is
irreducible:
$$ q_n = 1 - {n\choose 1}2^{2-n} + {n\choose 2}2^{4-2n} - {n\choose
3}2^{8-3n} - {n\choose 4}2^{15-4n} + O\left(n^52^{-5n}\right).$$

Here again, we provide the whole structure of the asymptotic expansion,
together with a combinatorial interpretation of the coefficients (they are not
all powers of two):

\begin{theorem}[Irreducible tournaments]\label{th-tournaments}
For any positive integer $r$, the probability $q_n$ that a random labeled
  tournament of size $n$ is irreducible satisfies
\begin{equation*}\label{formula: probability asymptotics for tournaments}
q_n = 1 - \sum\limits_{k=1}^{r-1} \big(2i_k-i_k^{(2)}\big) {n\choose k} \dfrac{2^{k(k+1)/2}}{2^{nk}} + O\left(\dfrac{n^r}{2^{nr}}\right),
\end{equation*}
where $i_k^{(2)}$ is the number of labeled tournaments of size $k$ with two irreducible components.
\end{theorem}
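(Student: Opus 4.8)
The plan is to work directly with the unique decomposition of a tournament into its strongly connected components. Recall that the components $C_1,\dots,C_m$ of any tournament on $[n]$ are linearly ordered, say so that every vertex of $C_i$ beats every vertex of $C_j$ whenever $i<j$; each $C_i$ is irreducible, and the tournament is irreducible exactly when $m=1$. At the level of exponential generating functions this is the relation $T(x)=(1-I(x))^{-1}$, where $T(x)=\sum_n 2^{\binom n2}x^n/n!$ and $I(x)=\sum_k i_k x^k/k!$; but for the asymptotics I only need the induced probabilities. Writing $t_n=2^{\binom n2}$ and $q_n=i_n/t_n$, and calling $C_m$ the \emph{bottom} component and $C_1$ the \emph{top} component, I first record the exact probability that the bottom component has a prescribed size $k$. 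A fixed $k$-subset $S$ is the bottom component if and only if all $k(n-k)$ arcs between $S$ and its complement point into $S$ (probability $2^{-k(n-k)}$) and the induced tournament on $S$ is irreducible (probability $q_k$), these two constraints involving disjoint sets of arcs. Since distinct $k$-subsets yield disjoint events,
\[ \Pr[\,|C_m|=k\,]=\binom nk q_k\, 2^{-k(n-k)}=i_k\binom nk 2^{k(k+1)/2-kn}, \]
and by arc-reversal symmetry the top component $C_1$ obeys the same law.

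The key observation is that a tournament is reducible (that is, $m\ge2$) if and only if its bottom component is small, or its top component is small, or both are large: for the threshold $r$, reducibility is the union of the three events $\mathcal B=\{|C_m|\le r-1\}$, $\mathcal T=\{|C_1|\le r-1\}$ and $R=\{m\ge2,\ |C_m|\ge r,\ |C_1|\ge r\}$. Since $\mathcal B$ and $\mathcal T$ are each disjoint from $R$, inclusion–exclusion collapses to
\[ 1-q_n=\Pr[\mathcal B]+\Pr[\mathcal T]-\Pr[\mathcal B\cap\mathcal T]+\Pr[R]. \]
By the first paragraph $\Pr[\mathcal B]=\Pr[\mathcal T]=\sum_{k=1}^{r-1} i_k\binom nk 2^{k(k+1)/2-kn}$, which already produces the factor $2i_k$. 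For the overlap I compute the exact probability that the bottom component has size $a$ and the top component has size $b$: choosing the two blocks, an irreducible structure on each, an arbitrary tournament on the $(n-a-b)$ middle vertices, and forcing all cross-arcs, gives $\binom na\binom{n-a}b i_a i_b\, t_{n-a-b}/t_n=\binom{n}{a+b}\binom{a+b}{a} i_a i_b\, 2^{(a+b)(a+b+1)/2-(a+b)n}$. Summing over $a+b=k$ and using $\sum_{a+b=k}\binom ka i_a i_b=i_k^{(2)}$ turns $\Pr[\mathcal B\cap\mathcal T]$ into $\sum_{k=2}^{r-1} i_k^{(2)}\binom nk 2^{k(k+1)/2-kn}$ up to terms of order $2^{-rn}$, which is exactly the correction $-i_k^{(2)}$.

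It remains to bound the residual and the truncation tails by $O(n^r 2^{-rn})$. Here one uses that the probability of possessing any down-set of size $k$ is at most $\binom nk 2^{-k(n-k)}$, so $\Pr[R]\le\sum_{k=r}^{n-r}\binom nk 2^{-k(n-k)}$; this sum is dominated by its two extreme terms $k=r$ and $k=n-r$, each $O(n^r 2^{-rn})$, the intermediate terms decaying faster than geometrically toward the centre. The same estimate controls the tails $k\ge r$ discarded from $\Pr[\mathcal B]$, $\Pr[\mathcal T]$ and $\Pr[\mathcal B\cap\mathcal T]$. Assembling the four pieces yields the announced expansion with coefficient $2i_k-i_k^{(2)}$.

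The delicate point—and the step I expect to be the main obstacle—is precisely this two-ended bookkeeping. The exact identity $1-q_n=\sum_{k=1}^{n-1} i_k\binom nk 2^{k(k+1)/2-kn}$ coming from the bottom component alone is misleading, because its summand has magnitude $\binom nk q_k 2^{-k(n-k)}$, which is equally large near $k=1$ and near $k=n-1$; truncating it naively would miss the symmetric contribution of a small \emph{top} component and would recover only the coefficient $i_k$ instead of $2i_k-i_k^{(2)}$. The care needed is to show that the genuinely large-$k$ part reorganizes, through the top component and the inclusion–exclusion overlap, into the missing $i_k-i_k^{(2)}$, and to verify that the middle regime, where neither end is small, is uniformly negligible at the scale $2^{-rn}$.
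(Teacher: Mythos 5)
Your proof is correct, and it takes a genuinely different route from the paper's. The paper derives the expansion analytically: it applies Bender's theorem to $A(z)=T(z)-1$ and $F(z,w)=-1/(1+w)$, so that $B(z)=I(z)-1$, and the coefficients fall out of the algebra $D(z)=1/\big(T(z)\big)^2=\big(1-I(z)\big)^2=1-2I(z)+\big(I(z)\big)^2$, which is where $2i_k-i_k^{(2)}$ comes from; the only combinatorial input is the sequence decomposition $T(z)=1/(1-I(z))$. You instead argue directly on the uniform measure: the exact law of the bottom (equivalently, by arc reversal, the top) irreducible component, inclusion--exclusion over the events ``small bottom'' and ``small top'', and a union bound over down-sets for the middle regime. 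This buys an elementary, self-contained proof and a transparent probabilistic reading of the coefficient: $2i_k$ counts the two ways of being reducible with a small extreme component, and $-i_k^{(2)}$ removes the double count, the overlap event being exactly a two-component tournament split between top and bottom, via the identity $i_k^{(2)}=\sum_{a+b=k}\binom{k}{a} i_a i_b$. What the paper's route buys is economy and reusability: Bender's hypotheses are checked once for $a_n=2^{\binom{n}{2}}/n!$ and serve Theorem~\ref{th-graph} and Theorem~\ref{th-tournaments} simultaneously, and the same machinery extends to the refinements announced in the final section. Note that the technical cores coincide: your estimate $\sum_{k=r}^{n-r}\binom{n}{k}2^{-k(n-k)}=O\big(n^r2^{-rn}\big)$ is exactly Bender's condition (ii) after multiplying by $n!/2^{\binom{n}{2}}$, and your sketch of it (the two extreme terms dominate, the middle decays much faster) is at the same level of rigor as the paper's own verification. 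One small slip: no tail is actually discarded from $\Pr[\mathcal{B}]$ or $\Pr[\mathcal{T}]$ --- those sums are exact by construction; the only truncation occurs in $\Pr[\mathcal{B}\cap\mathcal{T}]$, for the finitely many pairs $(a,b)$ with $a+b\geqslant r$, each of which is indeed $O\big(n^r2^{-rn}\big)$, so the slip is harmless.
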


We can notice that the coefficients cannot be interpreted as counting a single
class of combinatorial objects, since the coefficient $2i_2-i_2^{(2)} = 0 - 2$
is negative.

\section{Notations, strategy and tools}

Let us denote, for every integer $n$, $g_n$ the number of labeled graphs of
size $n$, $c_n$ the number of connected labeled graphs of size $n$, $t_n$ the
number of labeled tournaments of size $n$, and $i_n$ the number of irreducible
labeled tournaments of size $n$.
We have $p_n = c_n/g_n$ and $q_n = i_n/t_n$.

Looking for a proof of Theorem~\ref{th-graph}, we see two issues: finding a
formal relation between connected graphs and irreducible tournaments, and proving
the convergence.
A tool to settle the first issue is the symbolic method: we associate to each
integer sequence its exponential generating function:
$$G(z)=\sum\limits_{n=0}^{\infty}g_n\dfrac{z^n}{n!},\ \
C(z)=\sum\limits_{n=0}^{\infty}c_n\dfrac{z^n}{n!},\ \
T(z)=\sum\limits_{n=0}^{\infty}t_n\dfrac{z^n}{n!},\ \
I(z)=\sum\limits_{n=0}^{\infty}i_n\dfrac{z^n}{n!}.$$

Since $g_n = t_n = 2^{n\choose 2}$, we have
\begin{equation}\label{formula: G(z)=T(z)}
  G(z) = T(z) = \sum\limits_{n=0}^{\infty}2^{n\choose2}\dfrac{z^n}{n!}.
\end{equation}

Note that, while the number of labeled tournaments of size $n$ is equal to the
number of labeled graphs of size $n$, their associated species are not
isomorphic: for $n=2$, the two labeled tournaments are isomorphic (by swapping
the vertices), while the two labeled graphs are not, so this equality is
somewhat artificial.

Since every labeled graph can be uniquely decomposed as a disjoint union of
connected labeled graphs, we have
\begin{equation}\label{formula: G(z)=exp(C(z))}
  G(z) = \exp(C(z)).
\end{equation}

It remains to find a relation between tournaments and irreducible tournaments.
%

\begin{lemma}\label{lemma: tournament decomposition}
Any tournament can be uniquely decomposed into a sequence of irreducible
  tournaments.
\end{lemma}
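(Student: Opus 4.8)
The plan is to prove this by the standard strongly-connected-components decomposition, exploiting the well-known structure theorem for tournaments: the condensation of any tournament (obtained by contracting each strongly connected component to a single vertex) is itself a tournament on the components, and since a tournament on the condensation has no nontrivial strongly connected subset, that condensation must be transitive, hence a total (linear) order. This linear order on the components is exactly the sequence structure we want to produce.

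First I would set up the decomposition map. Given a tournament $T$ on a vertex set $V$, I would define its strongly connected components; by the equivalence recalled just before the statement, these are precisely the maximal irreducible sub-tournaments. The key combinatorial fact I would establish is that between any two distinct components $A$ and $B$, all edges point the same way: either every edge goes from $A$ to $B$, or every edge goes from $B$ to $A$. This is because if there were an edge from $A$ to $B$ and another from $B$ to $A$, then, using that $A$ and $B$ are each internally strongly connected, one could build a directed cycle through both $A$ and $B$, contradicting their maximality as strongly connected components. This dichotomy lets me define a tournament on the set of components, which is transitive precisely because cyclic domination among three components would again merge them into one larger strongly connected set.

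Next I would turn this transitive tournament on the components into a linear order $C_1, C_2, \dots, C_m$, and read off the ordered sequence of irreducible tournaments $(T|_{C_1}, \dots, T|_{C_m})$ induced on the components. For uniqueness, I would argue in the reverse direction: given any ordered sequence of irreducible tournaments, the way to reassemble them into a single tournament is forced, since all edges must run from earlier blocks to later blocks, and the internal edges of each block are prescribed. Thus the decomposition map and the reassembly map are mutually inverse, which gives the claimed unique decomposition. In the language of species, this says $T(z) = \frac{1}{1-I(z)}$, the sequence construction applied to $I$, a formula I expect will be used in the sequel.

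The main obstacle is the verification that the induced tournament on the components is transitive, i.e.\ that the edge direction between components is genuinely unambiguous and acyclic; everything else is bookkeeping. The cleanest route is to prove the all-edges-one-direction lemma first and the acyclicity second, both by the same reachability argument: assuming a violation produces a strictly larger strongly connected set, contradicting the maximality that defines a component. Once that is in hand, extracting the linear order and checking that assembly inverts decomposition are routine, so I would keep that part brief.
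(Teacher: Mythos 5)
Your proof is correct, but it runs in the opposite direction from the paper's. The paper argues top-down: if $T$ is not irreducible, then by the very definition of irreducibility there is a partition $A\sqcup B$ of the vertices with all edges directed from $A$ to $B$; recursing on $A$ and $B$ yields the sequence of irreducible blocks, and uniqueness is dispatched in one line by observing that these blocks are the strongly connected components. You instead build bottom-up from the strongly connected components: you prove that all edges between two distinct components point the same way, deduce that the condensation is an acyclic --- hence transitive --- tournament, i.e.\ a linear order, and read the sequence off from that order. The trade-off is clear: the paper's recursion needs nothing beyond the partition definition of irreducibility to get existence (the Rado--Roy equivalence with strong connectivity enters only for uniqueness) and is shorter; your route leans on that equivalence throughout and requires the two reachability lemmas, but in exchange it makes the structure fully explicit, in particular the bijection between tournaments and sequences of irreducible tournaments, which is exactly what underlies the identity $T(z)=1/(1-I(z))$ used in the sequel. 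One small point: your uniqueness paragraph only argues that reassembly from a given sequence is forced; to conclude that decomposition and reassembly are mutually inverse you also need that the blocks of an assembled sequence are precisely the strongly connected components of the resulting tournament (no larger strongly connected set can exist, since all inter-block edges point forward, so no path returns to an earlier block). This follows from the same reachability argument you already use, so it is not a gap, but it is the one sentence --- the analogue of the paper's closing remark --- that actually clinches uniqueness.
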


In terms of generating functions, Lemma~\ref{lemma: tournament decomposition}
translates to

\begin{equation}\label{formula: T(z)=1/(1-I(z))}
  T(z) = \dfrac{1}{1-I(z)}.
\end{equation}

Hence, part of the work will be to let those expressions interplay.

Regarding asymptotics, we will rely on Bender's Theorem~\cite{Bender1975}:

\begin{theorem}[Bender]\label{theorem: Bender's}
 Consider a formal power series
 $$
  A(z) = \sum\limits_{n=1}^{\infty}a_n z^n
 $$
 and a function $F(x,y)$ which is analytic in some neighborhood of $(0,0)$. Define
 $$
  B(z) = \sum\limits_{n=1}^{\infty}b_n z^n = F(z,A(z))
 \qquad
 \mbox{and}
 \qquad
  D(z) = \sum\limits_{n=1}^{\infty}d_n z^n = \dfrac{\partial F}{\partial y}(z,A(z)),
 $$
 Assume that $a_n\ne0$ for all $n\in\mathbb{N}$, and that for some integer $r\geqslant1$ we have
 $$
 \mbox{ (i) }\quad
 \dfrac{a_{n-1}}{a_n}\to0\,\mbox{ as }\, n\to\infty;
 \qquad\qquad
 \mbox{ (ii) }\quad
 \sum\limits_{k=r}^{n-r}|a_ka_{n-k}|=O(a_{n-r})\,\mbox{ as }\, n\to\infty.
 $$
 Then
 $$
  b_n = \sum\limits_{k=0}^{r-1}d_ka_{n-k} + O(a_{n-r}).
 $$
\end{theorem}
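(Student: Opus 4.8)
The plan is to prove Bender's Theorem by a careful bookkeeping of the monomials appearing in the composition, sorting them according to how many ``large'' factors they contain. Since $F$ is analytic near $(0,0)$, its Taylor coefficients satisfy $|f_{ij}|\leqslant CR^{-(i+j)}$ for some constants $C,R>0$, where $F(x,y)=\sum_{i,j\geqslant0}f_{ij}x^iy^j$. Writing $F_j(z)=\sum_i f_{ij}z^i$, we get $B(z)=\sum_{j\geqslant0}F_j(z)A(z)^j$, and for each fixed $n$ the coefficient $b_n$ is a \emph{finite} sum (only pairs with $i+j\leqslant n$ contribute), so the identity makes sense even though $A$ may have radius of convergence $0$. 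Before starting, I would record the two elementary consequences of hypothesis (i): the sequence $(a_n)$ is eventually increasing, and it grows faster than any geometric sequence. In particular $a_{n-r}\to\infty$, and any quantity bounded by $K^n$ for a fixed $K$ is automatically $O(a_{n-r})$.

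The core step is to expand $[z^{n-i}]A(z)^j=\sum a_{m_1}\cdots a_{m_j}$, the sum being over compositions $m_1+\cdots+m_j=n-i$ with $m_\ell\geqslant1$, and to split $b_n$ according to the number of indices $m_\ell$ that are $\geqslant r$ (the \emph{large parts}). Three regimes arise. When there is \emph{no} large part, all $m_\ell<r$ forces $j>(n-i)/r$, and a crude count of compositions weighted by the geometric bound on $f_{ij}$ shows this contribution is $O(K^n)$ for some $K$, hence $O(a_{n-r})$ by the remark above. When there is \emph{exactly one} large part, say $m_\ell=n-k$ with the remaining parts and the factor $z^i$ accounting for $k$, symmetry over the choice of the large index (a factor $j$) reproduces, for each $k$, the restricted coefficient of $D(z)=\partial_yF(z,A(z))=\sum_{j\geqslant1}jF_j(z)A(z)^{j-1}$. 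For $k<r$ no small part can itself be $\geqslant r$, so this restricted coefficient equals $d_k$ exactly and we obtain precisely $\sum_{k=0}^{r-1}d_ka_{n-k}$; the remaining range $r\leqslant k\leqslant n-r$ contributes $\sum_k a_{n-k}\widetilde{d}_k$ with $|\widetilde{d}_k|\leqslant K^k$, and the super-geometric growth of $(a_n)$ makes this tail $O(a_{n-r})$.

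The decisive regime is that of \emph{two or more} large parts, and this is where hypothesis (ii) enters. Selecting two indices $m_1,m_2\geqslant r$ and summing first over $m_1+m_2=s$ gives, by (ii), a bound $O(a_{s-r})$ for each fixed $s$; reassembling the remaining factors via the convolution identity turns the whole contribution into $\mathrm{const}\cdot[z^{n-r}]\Psi(z,A(z))$ for a new function $\Psi$ that is again analytic near $(0,0)$ with geometric Taylor coefficients (the combinatorial factor $\binom{j}{2}\leqslant 2^j$ only shrinks the radius). I expect this reduction to be the main obstacle: the resulting quantity is a coefficient of a composition of exactly the same type, merely at order $n-r$, so the argument is mildly circular.

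The clean way to close it is induction on $r$. The base case $r=1$ amounts to the a priori bound $[z^n]\Phi(z,A(z))=O(a_n)$, valid for any $\Phi$ analytic near $(0,0)$, which the same trichotomy establishes directly with all error terms collapsing into $O(a_{n-1})$; the inductive step feeds the order-$(n-r)$ composition produced above into the hypothesis for smaller $r$. Combining the three regimes then yields $b_n=\sum_{k=0}^{r-1}d_ka_{n-k}+O(a_{n-r})$, as claimed.
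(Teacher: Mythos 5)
First, a remark on the comparison itself: the paper does not prove this statement at all --- it is quoted as a known result with a citation to Bender's 1975 article --- so your attempt can only be judged on its own merits, not against an in-paper argument.

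Your overall plan (geometric bounds $|f_{ij}|\leqslant CR^{-(i+j)}$, expansion of $[z^{n-i}]A(z)^j$ over compositions, trichotomy by the number of parts $\geqslant r$) is the right one, and your first two regimes are handled correctly: the no-large-part regime is $O(K^n)=o(a_{n-r})$, and the one-large-part regime produces exactly $\sum_{k=0}^{r-1}d_k a_{n-k}$ plus a tail $\sum_{k\geqslant r}\widetilde{d}_k a_{n-k}$ that hypothesis (i) alone controls. The genuine gap is in the two-or-more-large-parts regime, and the fix you propose does not close it. You reduce that regime to $\mathrm{const}\cdot[z^{n-r}]\Psi(z,\widehat{A}(z))$, where $\widehat{A}(z)=\sum_n |a_n|z^n$ and $\Psi$ is built from the coefficients $|f_{ij}|\binom{j}{2}$; you correctly flag this as circular and propose induction on $r$ with base case $r=1$. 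But the circularity has nothing to do with $r$: for $r=1$ the same regime (all terms with $j\geqslant2$) reduces, by the identical computation, to a constant times $[z^{n-1}]\Psi(z,\widehat{A}(z))$ --- the very kind of quantity the base case is supposed to bound, merely at order $n-1$ and for a different function. So the base case is not established ``directly by the same trichotomy,'' and the induction cannot start. Nor can you rescue it by iterating the re-packaging (e.g.\ inducting on $n$): each iteration inflates the coefficient of $y^j$ by a factor of order $\binom{j}{2}$, and eliminating a monomial $y^j$ takes $j-1$ iterations, accumulating $\prod_{s=2}^{j}\binom{s}{2}\sim (j!)^2 2^{-j}$. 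This is fatal for the general statement: the sequence $a_n=n!$ satisfies (i) and (ii) for every $r$, yet bounds carrying $(j!)^2$-type factors with $j$ as large as $n$ are not $O(a_{n-r})$ for it. So the route fails genuinely, not just inelegantly.

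What is missing is a product (or ``peeling'') lemma with constants uniform in the exponent, proved by induction on $j$ rather than on $r$ or $n$. First observe that (i) together with (ii) for $r$ implies (ii) for $r=1$, with a single constant $C$ valid at every order: $\sum_{k=1}^{m-1}|a_k a_{m-k}|\leqslant C|a_{m-1}|$ for all $m$. Then, combining the \emph{first two} parts of each composition --- no choice of positions, hence no $\binom{j}{2}$ --- gives $[z^m]\widehat{A}(z)^j\leqslant C\,[z^{m-1}]\widehat{A}(z)^{j-1}$, and iterating, $[z^m]\widehat{A}(z)^j\leqslant C^{j-1}|a_{m-j+1}|$. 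The constant is geometric in $j$, which is exactly what lets the sum over $(i,j)$ converge against $|f_{ij}|\leqslant CR^{-(i+j)}$ and breaks the circularity. With this lemma, your regime 2 is finished directly: extract one large pair (the single factor $\binom{j}{2}$ is absorbed by the geometric bound on $f_{ij}$), apply (ii) for $r$ to that pair, and bound the leftover $[z^{n-i-r}]\widehat{A}(z)^{j-1}$ by $C^{j-2}|a_{n-r-i-j+2}|$; the same super-geometric-growth argument you already use in regime 1 then yields $O(a_{n-r})$. With that lemma inserted in place of your induction on $r$, your outline becomes a complete proof.
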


\section{Proofs}

\begin{proof}[Proof of Lemma~\ref{lemma: tournament decomposition}]

Let $T$ be a tournament. It is either irreducible, and all is done, or it
  consists of two nonempty parts $A$ and $B$ such that all edges between $A$
  and $B$ are directed from $A$ to $B$. Applying the same argumentation
  recursively to $A$ and $B$, we obtain a decomposition of $T$ into a sequence
  of subtournaments $T_1,\ldots,T_k$, such that each $T_i$ is irreducible and
  for every pair $i<j$, all edges go from $T_i$ to $T_j$ (see
  Figure~\ref{picture: tournament decomposition}). Since $T_i$ are also the
  strongly connected components of $T$, the decomposition is unique.

\begin{figure}
\begin{center}
\begin{tikzpicture}[>= latex, line width=.5pt, scale=0.8]
 \begin{scope}[scale=0.8]
  \def\quot{0.1}  
  \coordinate [label=0:$1$] (a1) at (40pt,0);
  \coordinate [label=45:$2$] (a2) at ([rotate = 60] a1);
  \coordinate [label=135:$3$] (a3) at ([rotate = 120] a1);
  \coordinate [label=180:$4$] (a4) at ([rotate = 180] a1);
  \coordinate [label=225:$5$] (a5) at ([rotate = 240] a1);
  \coordinate [label=-45:$6$] (a6) at ([rotate = -60] a1);
  \coordinate (a32) at ($(a3)!\quot!(a2)$);
  \coordinate (a31) at ($(a3)!\quot/1.73!(a1)$);
  \coordinate (a36) at ($(a3)!\quot/2!(a6)$);
  \coordinate (a35) at ($(a3)!\quot/1.73!(a5)$);
  \coordinate (a34) at ($(a3)!\quot!(a4)$);
  \coordinate (a16) at ($(a1)!\quot!(a6)$);
  \coordinate (a15) at ($(a1)!\quot/1.73!(a5)$);
  \coordinate (a14) at ($(a1)!\quot/2!(a4)$);
  \coordinate (a21) at ($(a2)!\quot!(a1)$);
  \coordinate (a25) at ($(a2)!\quot/2!(a5)$);
  \coordinate (a24) at ($(a2)!\quot/1.73!(a4)$);
  \coordinate (a65) at ($(a6)!\quot!(a5)$);
  \coordinate (a64) at ($(a6)!\quot/1.73!(a4)$);
  \coordinate (a62) at ($(a6)!\quot/1.73!(a2)$);
  \coordinate (a54) at ($(a5)!\quot!(a4)$);
  \draw[->] (a4) -- (a54);
  \draw[->] (a2) -- (a62);
  \draw[->] (a4) -- (a64);
  \draw[->] (a5) -- (a65);
  \draw[->] (a1) -- (a21);
  \draw[->] (a4) -- (a24);
  \draw[->] (a5) -- (a25);
  \draw[->] (a4) -- (a14);
  \draw[->] (a5) -- (a15);
  \draw[->] (a6) -- (a16);
  \draw[->] (a1) -- (a31);
  \draw[->] (a2) -- (a32);
  \draw[->] (a4) -- (a34);
  \draw[->] (a5) -- (a35);
  \draw[->] (a6) -- (a36);
  \foreach \p in {a1,a2,a3,a4,a5,a6} \filldraw [black] (\p) circle (1.5pt);
 \end{scope}
 \begin{scope}[xshift=2.5cm]
  \draw (0,0) node {$\leadsto$};
 \end{scope}
 \begin{scope}[xshift=5.0cm, scale=0.9]
  \def\quot{0.1}  
  \def\r{24pt}  
  \def\h{7pt}  
  \coordinate (c4) at (-20pt,0);
  \draw (c4) circle (\r);
  \draw (c4) circle (\r);
  \draw (c4)+(0,\h) node {$4$};
  \coordinate (c5) at (60pt,0);
  \draw (c5) circle (\r);
  \draw (c5) circle (\r);
  \draw (c5)+(0,\h) node {$5$};
  \draw (140pt,0) circle (\r);
  \draw (140pt,0) circle (\r);
  \coordinate (c3) at (220pt,0);
  \draw (c3) circle (\r);
  \draw (c3) circle (\r);
  \draw (c3)+(0,\h) node {$3$};
  \foreach \i in {0,1,2} \draw[->] (\r-20pt+\i*80pt,0) -- ++(80pt-2*\r,0);
  \coordinate [label=0:$1$] (c1) at (153pt,0);
  \coordinate [label=180:$2$] (c2) at (135pt,10pt);
  \coordinate [label=180:$6$] (c6) at (135pt,-10pt);
  \coordinate (c16) at ($(c1)!\quot!(c6)$);
  \coordinate (c21) at ($(c2)!\quot!(c1)$);
  \coordinate (c62) at ($(c6)!\quot!(c2)$);
  \draw[->] (c6) -- (c16);
  \draw[->] (c1) -- (c21);
  \draw[->] (c2) -- (c62);
  \foreach \p in {c1,c2,c3,c4,c5,c6} \filldraw [black] (\p) circle (1.5pt);
 \end{scope}
\end{tikzpicture}
\end{center}
\caption{Decomposition of a tournament as a sequence of irreducible components.}\label{picture: tournament decomposition}
\end{figure}
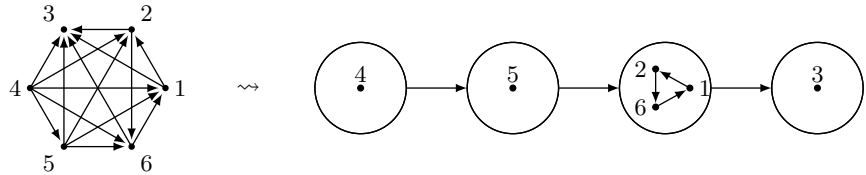

\end{proof}

\begin{proof}[Proof of Theorem \ref{th-graph}]

 Let us apply Bender's theorem (Theorem~\ref{theorem: Bender's}) the following way: take
$$
 A(z) = G(z) - 1
 \qquad\qquad
 \mbox{and}
 \qquad\qquad
 F(z,w) = \ln(1+w).
$$
Then, in accordance with formulas~\eqref{formula: G(z)=T(z)}, \eqref{formula: G(z)=exp(C(z))} and \eqref{formula: T(z)=1/(1-I(z))},
$$
 B(z) = \ln(G(z)) = C(z)
 \qquad
 \mbox{and}
 \qquad
 D(z) = \dfrac{1}{G(z)} = \dfrac{1}{T(z)} = 1 - I(z).
$$
Check the conditions of Theorem~\ref{theorem: Bender's}. In the case at hand, condition (i) has the form:
$$
 \dfrac{a_{n-1}}{a_n} = \dfrac{2^{n-1\choose2}}{(n-1)!}\dfrac{n!}{2^{n\choose2}} =  \dfrac{n}{2^{n-1}} \to 0,
 \qquad\mbox{as}\quad n\to\infty.
$$
To establish condition (ii), consider $x_k = n!a_ka_{n-k} = {n\choose k}2^{{k\choose2}+{n-k\choose2}}$, where $r\leqslant k\leqslant n-r$. Then $(x_k)$ decreases for $r\leqslant k\leqslant n/2$ and increases symmetrically for $n/2\leqslant k\leqslant n-r$.
  Bounding each summand by the first term is not enough, but bounding each summand (except for the first and last) by the second term gives the following:
 \begin{equation*}
 \begin{split} 
   \sum\limits_{k=r}^{n-r}a_ka_{n-k} & \leqslant \dfrac{1}{n!}{n\choose r}2^{{r\choose2}+{n-r\choose2}+1} + \dfrac{n-2r-1}{n!}{n\choose r+1}2^{{r+1\choose2}+{n-r-1\choose2}}\\
   & = O\left(\dfrac{2^{\big(n^2-(2r+1)n\big)/2}}{(n-r)!}\right) + O\left(\dfrac{2^{\big(n^2-(2r+3)n\big)/2}}{(n-r-2)!}\right) = O(a_{n-r}).
 \end{split}
 \end{equation*}
 Hence, Bender's theorem implies
 $$
  b_n = \dfrac{c_n}{n!} = \dfrac{2^{n\choose2}}{n!} - \sum\limits_{k=1}^{r-1}\dfrac{i_k}{k!}\dfrac{2^{n-k\choose2}}{(n-k)!}+O\left(\dfrac{2^{n-r\choose2}}{(n-r)!}\right).
 $$ 
 Dividing by $g_n/n!=2^{n\choose2}/n!$, we get
 $$
 p_n = \dfrac{c_n}{g_n} = 1 - \sum\limits_{k=1}^{r-1} i_k {n\choose k} \dfrac{2^{n-k\choose 2}}{2^{n\choose2}} + O\left(\dfrac{n^r}{2^{nr}}\right).
 $$

\end{proof}

\begin{proof}[Proof of Theorem \ref{th-tournaments}]

Let us apply Bender's theorem (Theorem~\ref{theorem: Bender's}) for
$$
 A(z) = T(z) - 1
 \qquad\qquad
 \mbox{and}
 \qquad\qquad
 F(z,w) = -\dfrac{1}{1+w}.
$$
Then, in accordance with formula~\eqref{formula: T(z)=1/(1-I(z))},
$$
 B(z) = -\dfrac{1}{T(z)} = - 1 + I(z)
 \qquad\qquad
 \mbox{and}
 \qquad\qquad
 D(z) = \dfrac{1}{\big(T(z)\big)^2} = \big(1 - I(z)\big)^2.
$$
Since $\big(I(z)\big)^2$ is the generating function for the class of labeled tournaments which can be decomposed into a sequence of two irreducible tournaments, we can rewrite the latter identity in the form
$$
 D(z) = 1 - \sum\limits_{n=1}^{\infty}\big(2i_k-i_k^{(2)}\big)\dfrac{z^n}{n!}.
$$
In the case at hand, the conditions that are needed to apply Theorem~\ref{theorem: Bender's} are the same as in the proof of Theorem~\ref{th-graph}, since the sequence $(a_n)$ is the same. Hence,
 $$
  b_n = \dfrac{i_n}{n!} = \dfrac{2^{n\choose2}}{n!} - \sum\limits_{k=1}^{r-1}\dfrac{2i_k-i_k^{(2)}}{k!}\dfrac{2^{n-k\choose2}}{(n-k)!}+O\left(\dfrac{2^{n-r\choose2}}{(n-r)!}\right).
 $$ 
 Dividing by $t_n/n!=2^{n\choose2}/n!$, we get
 $$
 q_n = \dfrac{i_n}{t_n} = 1 - \sum\limits_{k=1}^{r-1} \big(2i_k-i_k^{(2)}\big) {n\choose k} \dfrac{2^{n-k\choose 2}}{2^{n\choose2}} + O\left(\dfrac{n^r}{2^{nr}}\right).
 $$

\end{proof}

\section{Further results}

With a bit more work, we can compute the probability that a random graph of
size $n$ has exactly $m$ connected components,
and the probability that a random tournament of size $n$ has exactly $m$
irreducible components as $n$ goes to $\infty$. \\

In another direction, we can also generalize Theorem~\ref{th-graph} to the
Erd\"os-R\'enyi model $G(n,p)$, where the constant $2$ in the formulas is
replaced by $\rho=1/(1-p)$ and the sequence $(i_k)$ is replaced by a sequence
of polynomials $(P_k(\rho)) = 1, \rho-2, \rho^3-6\rho+6, \rho^6 - 8\rho^3
-6\rho^2 + 36\rho - 24, \dots$ with an explicit combinatorial interpretation.
\\

The methods presented here can also be extended to some geometrical context
where connectedness questions appear.
In particular, we will provide asymptotics for combinatorial maps, square tiled
surfaces, constellations, random tensor model
\cite{MonteilNurligareevPreparation}.
In some of the models, the coefficients in the asymptotic expansions show
connections with indecomposable tuples of permutations and perfect matchings.

\end{document}